\newtheorem{theorem}{Theorem}
\newtheorem{condition}[theorem]{Condition}
\newtheorem{definition}{Definition}
\newtheorem{problem}{Problem}
\title{\LARGE \bf
Transverse Contraction Criteria for Stability of \\Nonlinear Hybrid Limit Cycles
}
\author{Justin Z. Tang and Ian R. Manchester  
\thanks{*This work was supported in part by the Australian Research Council.}
\thanks{The authors are with the Australian Centre for Field Robotics (ACFR), Department of Aerospace, Mechanical and Mechatronic Engineering, University of Sydney, NSW 2006, Australia
        {\tt\small \{j.tang i.manchester\}@acfr.usyd.edu.au}}%
}
\begin{document}

\maketitle
\thispagestyle{empty}
\pagestyle{empty}

\begin{abstract}

In this paper, we derive differential conditions guaranteeing the orbital stability of nonlinear hybrid limit cycles.  These conditions are represented as a series of pointwise linear matrix inequalities (LMI), enabling the search for stability certificates via convex optimization tools such as sum-of-squares programming. Unlike traditional Lyapunov-based methods, the transverse contraction framework developed in this paper enables proof of stability for hybrid systems, without prior knowledge of the exact location of the stable limit cycle in state space. This methodology is illustrated on a dynamic walking example.
\end{abstract}

\section{Introduction}

Nonlinear hybrid dynamical systems with periodic solutions are widely found in diverse engineering and scientific fields such as electronics and mechanics. These hybrid systems contain continuous-time and discrete-time dynamics which interact with each other.   Stability of these dynamical systems is often a fundamental requirement for their practical value in applications. 

In this paper, we address the question: do all solutions of a hybrid nonlinear system starting in a particular set $K$ converge to a stable unique limit cycle?

A major motivation of this work is the study of underactuated bipedal locomotion \cite{Collins2005}, which can be represented as limit cycles in the state space \cite{Westervelt2007}. The control design and stability analysis of these ``dynamic walkers" are difficult since their dynamics are inherently hybrid and highly nonlinear \cite{Shiriaev2008,Manchester2010}.

The most well-known stability analysis tool for limit cycles is the Poincar\'e map \cite{Guckenheimer1997}, which describes the repeated passes of the system through a single transversal hypersurface. 
However, for nonlinear systems, the Poincar\'e map generally cannot be found explicitly. Further, since the system's evolution is only analyzed on a single surface, regions of stability in the full state space are difficult to evaluate. 

In practice, stability in the full state space is often estimated using exhaustive simulation, such as via cell-to-cell mapping  \cite{Hsu1980}, which has been applied to analysis of walking robots \cite{Schwab2001}.  However, computational costs of these methods are exponential in the dimension of the system.

In recent years, convex optimization methods have been widely applied in search for a ``stability certificate" based on Lyapunov theory \cite{Khalil2002}.  To characterize regions of stability for limit cycles, \cite{Goncalves2003} and \cite{Goncalves2005} introduced the notion of the Surface Lyapunov function, which verifies stability based on the ``impact map" between one switching surfaces to the next switching surface. The method is limited to Piecewise Linear Systems. In \cite{Manchester2010b}, nonlinear limit cycle stability analysis was performed by constructing Lyapunov functions in the transverse dynamics.  

However, these Lyapunov based methods require knowledge of the exact location of the limit cycle in state space, and hence are not applicable when the system dynamics are uncertain, since uncertainty will generally change the location of the limit cycle.

An alternative approach to Lyapunov methods is to search for a contraction metric  \cite{Aylward2008,Lohmiller1998}.  By defining stability incrementally between two arbitrary nearby trajectories, contraction analysis answers the question of whether the limiting behaviour of a given dynamical system is independent of its initial conditions. For analysis of limit cycles, transverse contraction was first introduced in \cite{Manchester2014}.

In this paper, we propose a transverse contraction framework for analysis of hybrid limit cycles, building on the work of transversal surface construction in  \cite{Manchester2010b}, and continuous transverse contraction of  \cite{Manchester2014}.  For the purposes of robustness analysis, an important advantage is that Lyapunov functions must be generally constructed around a known equilibrium, whereas a contraction metric derived herein implies existence of a stable equilibrium indirectly.  This is vital if the equilibrium point may change location depending on  unknown dynamics.

This paper proceeds as follows. Problem formulation and preliminary notations are outlined in Section II.  In Section III, the transverse contraction conditions guaranteeing stability of a limit cycle in a nonlinear hybrid system is presented.   We then formulate convex criteria enforcing these conditions on the nonlinear system in Section IV, thereby enabling the search for stability certificates via convex optimisation techniques such as sum-of-squares programming. An  illustrative example is given with a dynamic walking model in Section V.
Concluding remarks are given in Section VI.

\section{Preliminaries and Problem Formulation}


We consider the following class of autonomous hybrid dynamical problems.

\begin{problem}
Consider a hybrid system with impulse.
\begin{align}
\dot x &=f(x) &x\notin S_i^- \label{eq:hybrid-sys1}\\
x^+&=g (x) &x\in S_i^- \label{eq:hybrid-sys2}
\end{align}
where $f$, $g$ are smooth, and $S_i$ for $i=1,2...$ is defined as a ``switching
surface."
\end{problem}


We assume that $x\in \mathbb R^n$ and  $f:K\rightarrow \mathbb R^n$, where a set $K$ is a compact subset of $\mathbb R^n$ and strictly forward invariant under $f$, such that any solutions of (\ref{eq:hybrid-sys1}), (\ref{eq:hybrid-sys2}) starting in $x(0)\in K$ is in the interior of $K$ for all $t>0$.  We will refer to the Jacobian of $f$ as $A(x) := \frac{\partial f}{\partial x}$.

We denote the solution curve of the system as $\Phi (x_0,t)$, such that $x(t)=\Phi(x_0,t)$ is the solution at time $t>0$ of the dynamical system with initial state $x(0)=x_0$.

Suppose the system exhibits a non-trivial $T$-periodic orbit, i.e., for a periodic solution $x^*$, there exists some $T>0$ such that $x^\star(t)=x^\star(t+T)$ for all $t$.  Such a solution cannot be asymptotically stable, as perturbations in phase are persistent.  Instead, \textit{orbital stability} is better posed  \cite{Hale1980}.

The orbit of a periodic solution is the set $\mathcal X^\star := \{x\in \mathbb R^n : \exists t \in [0,T) : x=x^\star(t) \}$. The solution is said to be \textit{orbitally stable} if there exists a $b>0$ such that for any $x(0)$ satisfying $\text{dist}(x(0), \mathcal X^\star)<b$, the unique solution exists  and $\text{dist}(\Phi(x_0,t),\mathcal X^\star) \rightarrow 0$ as $t \rightarrow \infty$.  Further, the system is \textit{exponentially orbitally stable} if the system is orbitally stable and there exists a $b>0, \lambda >0, k>0$ such that $\text{dist}(\Phi(x_0,t),\mathcal X^\star) \leq k \text{dist}(\Phi(x_0,t),\mathcal X^\star)e^{-\lambda t}$. 

A switching surface defined by $S:=\{ x\ |\ c(x)=0\}$ is a $(n-1)$-dimensional hyperplane embedded in a manifold $\mathcal M$, where $c(x)$ is linear in $x$. The tangent space of $\mathcal M$ at $x \in \mathcal M$ is denoted as $T_x \mathcal M$, and the tangent bundle of $\mathcal M$ is denoted as $T \mathcal M = \bigcup_{x\in \mathcal M}\{x\} \times T_x\mathcal M.$

For simplicity, we assume that the region $K$ is broken  into a finite sequence of continuous ``tubes" in the state space, which contains no equilibrium point, i.e., $\forall x\in K, f(x)^{T}f(x)>0 $. Further, assume that all solutions starting in each continuous phase of $K$ approach a particular switching surface and that $\forall x \in S_i, f(x)^Tz_i(x)\neq 0$ where $z_i(x)$ is the normal vector of $S_i$.

Our goal is to verify that, in our hybrid system (\ref{eq:hybrid-sys1}), (\ref{eq:hybrid-sys2}), all solutions starting in the particular region $K$ are orbitally stable and converge to a unique limit cycle.  This is illustrated in Fig \ref{fig:tube} for a system with two continuous phases and two switching surfaces.  The verified region is shaded green, and the stable limit cycle in red.  Continuous dynamics are shown in solid line with discrete impulse between switching surfaces shown in dotted line.

\begin{figure}[h]
    \centering
    \includegraphics[width=4.8cm]{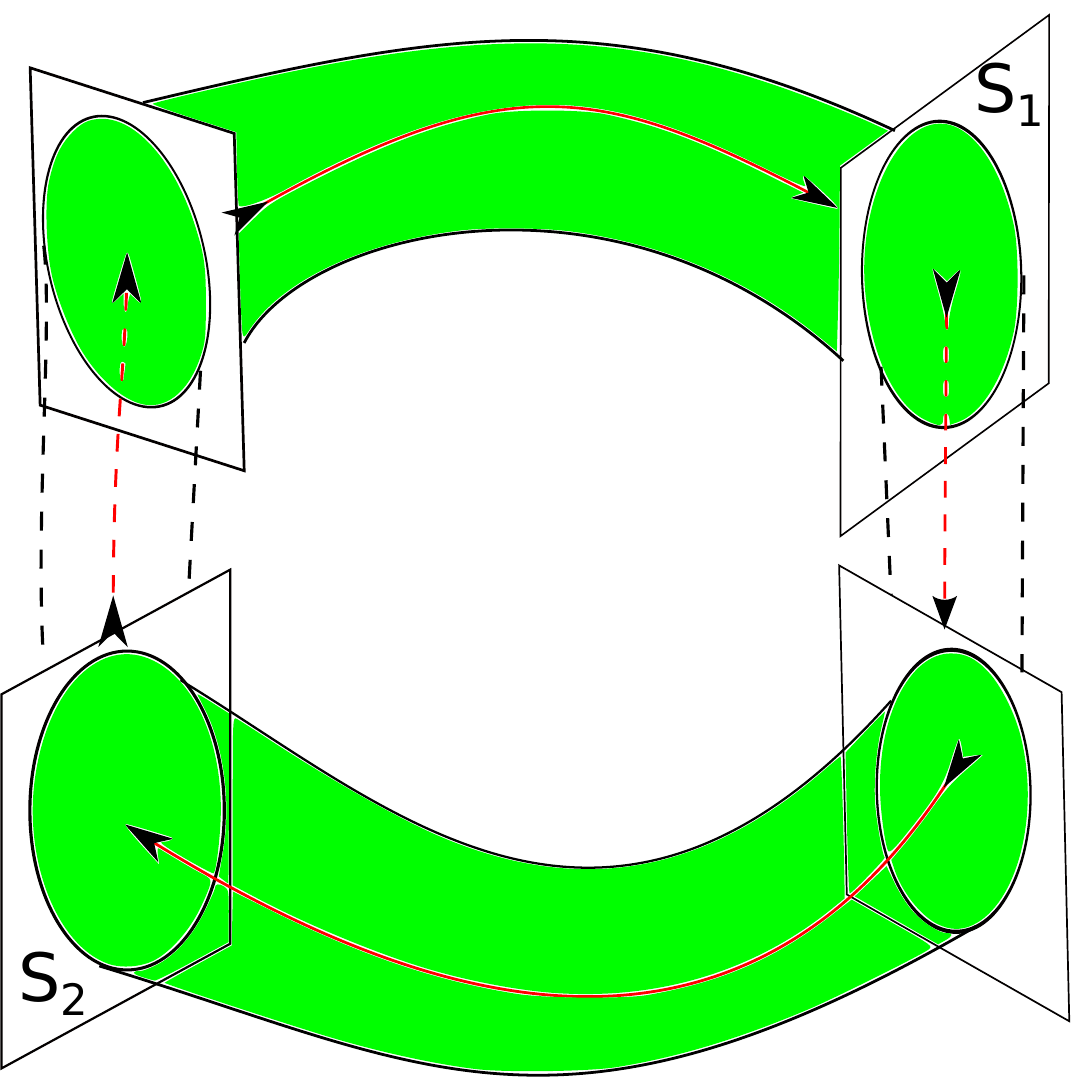}
    \caption{Region of stability around a hybrid limit cycle}
    \label{fig:tube}
\end{figure}

For completeness, we now restate the transverse contraction condition for continuous systems derived in \cite{Manchester2014}. Unless otherwise stated, we assume a Riemannian distance $V(x,\delta_x) = \sqrt{\delta_{x}'M(x)\delta_x}$,
with $M(x)$ symmetric positive-definite for all $x$.

\begin{definition}[Transverse Contraction]\label{def:transverse}
A continuous system $\dot x = f(x)$ is transverse contracting with rate $\lambda$ if there exists a Riemannian metric $V(x, \delta_x)$ satisfying

\begin{equation} \label{eq:ian-contract}
\frac{\partial V(x,\delta_x)}{\partial x} f(x) + \frac{\partial V(x,\delta_{x})}{\partial \delta_x} \frac{\partial f(x)}{\partial x}\delta_x \leq -\lambda V(x,\delta_x)
\end{equation}
for all $\delta_x\neq 0$ such that $\frac{\partial V}{\partial \delta_x}f(x)=0$.
\end{definition}

The latter condition requires $\delta_x$ to be transverse to the flow of the system, i.e., $\delta_ x$ and $f(x)$ are orthogonal with respect to the metric $M(x)$. In the case that $V(x,\delta_x) := \sqrt{\delta_x'M(x)\delta_x}$, it is true if $\delta_x'M(x)f(x)= 0$.

We now state a convex condition that is necessary and sufficient for transverse contraction derived in \cite{Manchester2014}.

\begin{definition}[Convex Criterion for Transverse Contraction] \label{def:contraction-metric-subsystem}
A system $\dot x = f(x)$ is transverse contracting with rate $\lambda$ and a metric $V(x,\delta) = \sqrt{\delta 'M(x)\delta}$ if and only if there exists a function $W(x):=M(x)^{-1}$ and $\rho(x) \geq 0$ such that
\begin{equation} \label{eq:transverse-contraction}
W(x)A(x)'+A(x)W(x)-\dot W(x) + 2\lambda W(x) - \rho(x)Q(x) \leq 0
\end{equation} 
where $Q(x):=f(x)f(x)'$ 
\end{definition}

It was shown in \cite{Manchester2014} that if a system is transverse contracting,
then all solutions starting with $x(0)\in K$ are stable under time reparameterization,
or ``Zhukovski stable" \cite{Leonov2006}, and hence converge to a unique
limit cycle.



\section{ Contraction Conditions for Limit Cycles in Hybrid Systems}

In this section, we derive the transverse contraction conditions for hybrid nonlinear limit cycles and show that these conditions guarantee the stability of a unique limit cycle within a particular set $K$. 

For simplicity of expression we will consider the problem of single switching surface and a single set of continuous dynamics. However, extension to multiple switches and multiple continuous phases is straightforward.

\begin{condition}[Metric condition] \label{th:metric-cond}
For a nonlinear system $\dot x = f(x)$ with a flat switching surface $S:=\{ x\ |\ c(x)=0\}$ if for all $x \in S$, the tangent space of the switching surface at $x$, $T_x\mathcal M$ satisfies
\begin{equation} \label{eq:metric-orthog}
f(x)^TM(x)\delta_x = 0
\end{equation}
for all $\delta_x \in T_x\mathcal M$, then, according to the metric $M(x)$, all trajectories approach the switching surface orthogonally. This is true since the direction vector of for any trajectory on the switching surface $x \in S$ is given by $f(x)$.
\end{condition}




\begin{theorem} \label{th:ball-contract}
If, firstly, the continuous part of the system in
Eq. (\ref{eq:hybrid-sys1}) is transverse contracting according to Definition
\ref{def:transverse}; secondly, Condition \ref{th:metric-cond}
is satisfied on the switching surface; and, thirdly, the discrete part of the system in Eq. (\ref{eq:hybrid-sys2}) satisfies 
\begin{equation}
\frac{\partial g}{\partial x}' M \frac{\partial g}{\partial
x} - M \leq 0
\end{equation}
 for all $\delta_x'Mf=0$; then,  all solutions in $K$ are locally Zhukovski stable.  Hence, we can construct, 
for each $x\in K$ locally a ball, $B_x$, of constant radius centred around a given trajectory at $x$, where trajectories starting in $B_x$ remain in $B_x$ under time reparameterization as $t\rightarrow \infty$.  

\end{theorem}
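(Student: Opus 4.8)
The plan is to reduce the hybrid case to the continuous-time result of \cite{Manchester2014} quoted above, by showing that the two extra hypotheses — the metric-orthogonality Condition~\ref{th:metric-cond} and the contraction inequality on the reset Jacobian — guarantee that the transverse Riemannian distance does not grow across a switch. Concretely, I would track an infinitesimal displacement $\delta_x(t)$ between a nominal trajectory $x(t)=\Phi(x_0,t)$ and a neighbouring one, decompose it into a component along $f(x)$ and a component transverse to the flow in the metric $M(x)$ (i.e.\ satisfying $\delta_x'M(x)f(x)=0$), and establish that the transverse part of $V(x,\delta_x)=\sqrt{\delta_x'M(x)\delta_x}$ is monotonically non-increasing along any complete hybrid solution.

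First I would treat the continuous phase. Here the first hypothesis is exactly Definition~\ref{def:transverse}, so invoking the quoted result of \cite{Manchester2014} the transverse component of $V$ obeys $\dot V \le -\lambda V$, and hence a transverse ball of radius $r$ shrinks to radius $r e^{-\lambda \tau}$ over a continuous arc of (reparameterized) length $\tau$. The tangential component is absorbed by the time reparameterization and does not contribute to the transverse distance, which is what makes Zhukovski rather than asymptotic stability the natural notion here.

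Second, and this is the delicate step, I would analyse the crossing of the switching surface $S$. Two neighbouring trajectories generally reach $S$ at times differing by $\Delta t = O(\|\delta_x\|)$. Writing the separation of the two impact points as $\delta_x(t_1) + f(x)\,\Delta t$ and using that both points lie on $S$ — so their difference lies in $T_xS$ — the free timing parameter $\Delta t$ is precisely what cancels the component of $\delta_x$ along $f$, leaving a purely transverse displacement in $T_xS$; this is where Condition~\ref{th:metric-cond} enters, since it identifies $T_xS$ with the $M$-orthogonal complement of $f(x)$. Applying the reset, the displacement transforms as $\delta_x \mapsto \tfrac{\partial g}{\partial x}\delta_x$, and the third hypothesis gives $\delta_x'\tfrac{\partial g}{\partial x}'M\tfrac{\partial g}{\partial x}\delta_x \le \delta_x'M\delta_x$ for such transverse $\delta_x$, i.e.\ the transverse distance does not increase through the impulse.

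Combining the two phases, the transverse distance strictly decreases on continuous arcs and is non-increasing at each switch, hence is non-increasing along any full hybrid trajectory; integrating this infinitesimal statement over a neighbourhood shows that a transverse ball $B_x$ of fixed radius is forward invariant under the time-reparameterized flow, which is exactly the claimed local Zhukovski stability. The step I expect to be the main obstacle is the switching-surface analysis above: rigorously handling the short $O(\Delta t)$ interval during which one trajectory has been reset while the other has not — so that the two momentarily obey different dynamics — and confirming that Condition~\ref{th:metric-cond} confines the entire effect of this mismatch to the reparameterizable tangential direction rather than letting it leak into the transverse distance. Everything else is either a direct citation of \cite{Manchester2014} or a routine application of the two algebraic inequalities.
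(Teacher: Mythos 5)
Your proposal follows essentially the same route as the paper: decompose virtual displacements into a tangential part along $f$ and a transverse part in the $M$-orthogonal complement, use the continuous transverse contraction result of \cite{Manchester2014} on arcs, invoke Condition~\ref{th:metric-cond} to identify the transverse plane with $T_xS$ at impact so the reset inequality gives non-increase of the transverse distance, and conclude forward invariance of a local ball under reparameterization. The paper packages this via the explicit coordinate change $\Pi(x)$ and the transverse Lyapunov function $V_\perp = x_\perp' M_\perp x_\perp$ of \cite{Hauser1994}, but the underlying argument is the same; the impact-time mismatch you flag as delicate is in fact glossed over in the paper's own proof as well.
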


\begin{proof}
From the first condition in the Theorem and by the results of Definition \ref{def:transverse}, we know that the system is transverse contracting -- i.e., all virtual displacements $\delta_x$ in the subspace defined by the plane  $\delta_x' M(x) f(x) = 0$ are contracting.

Suppose we define locally for each $x\in K$ a smooth change of coordinates $x\rightarrow (\tau,x_\perp)$, highlighting the dynamics tangential and transverse to the flow of the system.

The transformation, $\Pi(x)$, can be represented by a set of bases $\{e_1,...,e_n\}$, where $e_1$ is in the direction of $f(x)$ and $e_2,...,e_n$ are independent and lie on the contracting plane defined by $\delta_x'M(x)f(x)=0$.  By definition of orthogonality, this also implies $e_2,...e_n$ are orthogonal to $e_1$. This transformation is given by:

\begin{equation} \label{eq:transform-coord}
\Pi(x)x = \begin{bmatrix} e_1 \\ e_2 \\ \vdots \\ e_n \end{bmatrix} x = \begin{bmatrix} \tau \\ x_\perp \end{bmatrix}
\end{equation}
where $\tau$ is a 1-dimensional ``phase" variable tangential to the flow of the system, and $x_\perp$ is the $(n-1)$-dimensional transverse dynamics in the contracting subspace.

The corresponding virtual displacements of the system in the new coordinate can be found via the Jacobian of the transformation.
\begin{equation} \label{eq:diff-coord-change}
\begin{bmatrix} \delta_\tau \\ \delta_\perp \end{bmatrix} = \bar \Theta \delta_x
\end{equation}
where $\bar \Theta := \frac{\partial \Pi}{\partial x}$. $\delta_\perp$ lies in the subspace defined by $\delta_x M(x) f(x) = 0$.

It is shown by  \cite{Hauser1994, Manchester2010b} that the differential system in the new coordinate becomes 

\begin{equation}
\frac{d}{dt} \begin{bmatrix} \delta_\tau \\ \delta_\perp \end{bmatrix} =
\begin{bmatrix} 0 & \star \\ 0 & A_\perp(x) \end{bmatrix} \begin{bmatrix}
\delta_\tau \\ \delta_\perp \end{bmatrix}
\end{equation}
where $A_\perp$ is the transverse linearization.


By the construction of $\Pi(x)$ in (\ref{eq:transform-coord}), since $e_1$ is orthogonal to all $e_2,...,e_n$, we have $e_1' M(x)e_k = 0$ for all $k=2,...,n$. Therefore, we can separate the transverse and tangential components in the metric $M(x)$ as below:
\begin{align}
V &= \delta_x' M(x) \delta_x \\
V &= \begin{bmatrix} \delta_\tau \\ \delta_\perp \end{bmatrix}^T 
\begin{bmatrix} M_\tau(x) & 0 \\ 0 & M _\perp (x) \end{bmatrix} 
\begin{bmatrix} \delta_\tau \\ \delta_\perp \end{bmatrix} \\
V &= M_\tau(x) |\delta_\tau|^2 + \delta_\perp M_\perp(x) \delta_\perp
\end{align}

Since $\delta_\perp$ lies entirely on the plane defined by $\delta_x M(x) f(x) = 0$, which is contracting by Definition \ref{def:transverse}, we yield: 
\begin{equation} \label{eq:perp-contract}
\delta_\perp \left(A_\perp(x)'M_\perp(x) +\ M_\perp(x) A_\perp(x)+ \dot M_\perp(x) \right) \delta_\perp <0
\end{equation}

By the results of \cite{Hauser1994}, we can construct a Lyapunov function $V_\perp = x_\perp' M_\perp x_\perp$ and for sufficiently small $\|x_\perp\|$  around the coordinate change, $\frac{d}{dt}V_\perp <0$ can be guaranteed by (\ref{eq:perp-contract}).


Now, by the second condition in the Theorem, during discrete instantaneous switching when $x\in S$, all trajectories approach the switching surface orthogonally  by the results of Condition \ref{th:metric-cond}. Hence, the transversal plane $\delta_xM(x)f(x)=0$ for trajectories on the switching surface aligns with the switching surface itself.  Suppose now on the switching surface, $V$ is non-increasing during the impulse:
\begin{align}
\delta_x'^+ M(x) \delta_x^+ &\leq \delta_x'^- M(x) \delta_x^- \\
\delta_x' \left(\frac{\partial g}{\partial x}' M \frac{\partial g}{\partial
x} - M \right) \delta_x &\leq 0 \label{eq:discrete-decrease}
\end{align}
for all $\delta_x M(x)f(x) = 0$.

Since the transversal plane for the trajectory coincide with the switching surface, by construction in (\ref{eq:transform-coord}), $\delta_\perp$ lies entirely on the switching surface. Hence (\ref{eq:discrete-decrease}) is equivalent to $V_\perp^+ \leq V^-_\perp$.

We now prove local Zhukovski stability using construction similar to \cite{Hauser1994}. Let $\sigma$ be a nearby solution to $x$.  Then, by the construction of $x_\perp$, there exists some compact region around $x$ where $k_1 \text{dist}(\sigma,x) \leq \|x_\perp\| \leq k_2\text{dist}(\sigma,x)$, for some $k_1, k_2>0$.  Hence, $k_3 \text{dist}(\sigma,x) \leq V_\perp \leq k_4\text{dist}(\sigma,x)$ for some $k_3,k_4>0$.

Since our conditions show $V_\perp$ is uniformly decreasing for all $x\in K$ in both the continuous dynamics and across the switching surface, as $t\rightarrow \infty$, $x_\perp \rightarrow 0$ locally.

Therefore, for all $x\in K$, there exists locally a ball, $B_x$, of constant radius centred around the trajectory at $x$, where trajectories
starting in $B_x$ remain in $B_x$ under time reparameterization as $t\rightarrow
\infty$. \end{proof}

\begin{theorem} \label{th:overall-contract}
If the conditions of Theorem \ref{th:ball-contract} is satisfied, for every pair of solutions $x_1$ and $x_2$ in $K$, there exists time reparameterization
$\tau(t)$ such that $x_1(t) \rightarrow x_2(\tau(t))$ as $t\rightarrow \infty$.
\end{theorem}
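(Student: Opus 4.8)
The goal is to promote the \emph{local} conclusion of Theorem~\ref{th:ball-contract} --- that around every $x\in K$ there is a ball $B_x$ of constant radius whose trajectories collapse onto the one through $x$ as $t\to\infty$ up to a time reparameterization --- to a statement about an \emph{arbitrary} pair of solutions. The plan is a covering-and-chaining argument that exploits the compactness of $K$ and the fact that the tubes comprising $K$ join through the switching surfaces into a single connected region. First I would join the initial conditions by a continuous path $\gamma:[0,1]\to K$ with $\gamma(0)=x_1(0)$ and $\gamma(1)=x_2(0)$. Covering the compact image $\gamma([0,1])$ by the interiors of the balls $B_x$ and extracting a finite subcover yields a finite chain of overlapping balls $B_{p_0},\dots,B_{p_N}$ with $p_0=x_1(0)$, $p_N=x_2(0)$, and $B_{p_{k-1}}\cap B_{p_k}\neq\emptyset$.

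Next I would make the relation ``the solution through $a$ converges to the solution through $b$ up to time reparameterization'' transitive along this chain. For consecutive links, any point in the overlap lies in both balls, so Theorem~\ref{th:ball-contract} gives convergence up to reparameterization between the solutions through $p_{k-1}$ and $p_k$; composing the $N$ intermediate reparameterizations then produces a single monotone $\tau(t)$ with $x_1(t)\to x_2(\tau(t))$. The underlying mechanism that makes each link contract is the transverse estimate already assembled in the proof of Theorem~\ref{th:ball-contract}: along the family of solutions generated by $\gamma$, the transverse displacement $\delta_\perp$ decays at rate $\lambda$ in every continuous phase by~(\ref{eq:perp-contract}) and does not grow across a switch by~(\ref{eq:discrete-decrease}), so the transverse length of the image curve $s\mapsto\Phi(\gamma(s),t)$ tends to zero while the free tangential component is absorbed into $\tau(t)$.

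The main obstacle is the hybrid desynchronization: as the family $\Phi(\gamma(s),t)$ evolves, neighbouring trajectories reach the switching surface at different instants, so at a fixed $t$ some members have already reset while others have not, momentarily breaking the clean tangential/transverse splitting used above. I would resolve this with Condition~\ref{th:metric-cond}: since all trajectories meet the surface orthogonally in the metric $M(x)$, the displacement caused by the timing mismatch at a reset is purely tangential and adds nothing to the transverse length, so the decay estimate survives the switch. The remaining care is bookkeeping --- verifying that the composed reparameterization $\tau(t)$ stays monotone and well defined as the switch count grows, and that $\gamma$ can be kept inside the forward-invariant set $K$ throughout the argument.
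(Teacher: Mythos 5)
Your proposal is correct and rests on the same two pillars as the paper's proof --- a path $\gamma$ joining the two initial conditions inside $K$, and the local balls $B_x$ supplied by Theorem \ref{th:ball-contract} --- but the globalization step is genuinely different. The paper introduces the Riemannian length and geodesic distance along $\gamma$ and then argues by contradiction: if the two solutions diverged, there would be a supremum $k\in[0,1]$ of parameters whose trajectories still converge to $x_1$'s, and the local ball at $\iota(k)$ would contradict the maximality of $k$. You instead extract a finite subcover of the compact image $\gamma([0,1])$ and chain the pairwise convergences through the overlaps, composing the intermediate reparameterizations. Your route is more constructive --- it actually exhibits $\tau(t)$ as a finite composition and makes explicit the transitivity and symmetry of ``converges up to reparameterization'' that the paper's supremum argument uses silently --- at the cost of the bookkeeping you flag (monotonicity and properness of the composed $\tau$, so that the triangle-inequality step in the transitivity argument actually applies). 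You also surface a real subtlety that the paper's proof passes over: neighbouring members of the family $\Phi(\gamma(s),t)$ reach the switching surface at different instants, and your observation that Condition \ref{th:metric-cond} renders the resulting mismatch purely tangential, hence invisible to the transverse quantity controlled by (\ref{eq:perp-contract}) and (\ref{eq:discrete-decrease}), is exactly the right repair. Both arguments share the same unstated hypothesis that $K$ is path-connected; without it the claim fails for solutions lying in different components, so this should be made explicit whichever route is taken.
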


\begin{proof}
Suppose there are two nearby trajectories $x_1,x_2$.  We define a smooth
mapping $\gamma : [0,1] \rightarrow K$ where $\gamma(0)=x_1$ and $\gamma(1)
= x_2$, such that $\frac{\partial \gamma}{\partial s} \neq 0$ for all $s$.

Using the Riemannian metric $M(x)$ and associated distance function $V(x,\delta)
= \sqrt{\delta'M(x)\delta}$, the length of a smooth path between $x_1$ and
$x_2$ is given by
\begin{equation} \label{eq:line-integral}
L(\gamma)=\int_{0}^{1}V\left(\gamma(s),\frac{\partial }{\partial s} \gamma
(s) \right) ds
\end{equation}
Let $\Gamma(x_1,x_2)$ be the set of all smooth paths between $x_1$ and $x_2$.
Then, the geodesic distance between $x_1$ and $x_2$ is given by
\begin{equation} \label{eq:geodesic}
d(x_1,x_2)=\min_{\iota \in \Gamma(x_1,x_2)} L(\iota)
\end{equation} 
and $\iota(s)$ is the geodesic curve.

We prove, by contradiction, that Theorem \ref{th:ball-contract} proves Zhukovski stability between any $x_1,x_2\in K$.

Suppose  $x_1$ and $x_2$ diverge under some time reparameterization.  Then there exists a supremum $k$, where $k\in [0,1]$, for which $\iota(k)$ no longer converges to $x_1$.  But by Theorem \ref{th:ball-contract}, since $\iota(k)\in K$, one can construct at $\iota(k)$ a local ball of constant radius where immediately nearby trajectories
would remain in that ball after the impulse, which contradicts the proposition. 
Therefore,  $x_1(t) \rightarrow x_2(\tau(t))$ as $t\rightarrow \infty$.
 \end{proof}

\begin{theorem} If all conditions of Theorem \ref{th:ball-contract} are satisfied, there exists a unique limit cycle that is orbitally stable. 

\end{theorem}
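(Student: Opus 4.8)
The plan is to establish the three claims — existence, uniqueness, and orbital stability — in that order, leaning on Theorems \ref{th:ball-contract} and \ref{th:overall-contract}.

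\emph{Existence.} First I would produce an actual periodic solution. Since every trajectory in $K$ crosses the switching surface $S$ transversally (by the standing assumption $f(x)^Tz_i(x)\neq 0$ for $x\in S$), the first-return (impact) map $P:\Sigma\to\Sigma$ is well defined on a compact set $\Sigma\subseteq S\cap K$ and maps $\Sigma$ into itself by forward invariance of $K$. By Theorem \ref{th:ball-contract} the transverse coordinate $x_\perp$ is strictly contracting under the continuous flow and non-increasing across the discrete jump, so the composite return map $P$ is a contraction in the transverse metric on $\Sigma$. Invoking a fixed-point argument (Banach on the complete metric space $(\Sigma,d)$, or equivalently observing that the nested forward images of $\Sigma$ shrink to a point), $P$ admits a fixed point $x^\star\in\Sigma$. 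The solution through $x^\star$ returns to itself after one cycle, hence is $T$-periodic, and its orbit $\mathcal X^\star$ is the desired limit cycle. Equivalently, one may argue through $\omega$-limit sets: compactness of $K$ makes $\omega(x_0)$ nonempty and invariant, and Theorem \ref{th:overall-contract} forces all trajectories inside $\omega(x_0)$ to collapse onto one another; since $K$ contains no equilibrium, $\omega(x_0)$ must reduce to a single periodic orbit.

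\emph{Uniqueness.} Next I would rule out a second cycle. Suppose two distinct periodic orbits $\mathcal X_1^\star$ and $\mathcal X_2^\star$ lie in $K$, and pick solutions $x_1,x_2$ on them. By Theorem \ref{th:overall-contract} there is a reparameterization $\tau(t)$ with $d(x_1(t),x_2(\tau(t)))\to 0$. Because $x_1$ is periodic it revisits every point of the closed set $\mathcal X_1^\star$ infinitely often, so the vanishing distance forces $\mathcal X_1^\star\subseteq\mathcal X_2^\star$; by symmetry $\mathcal X_2^\star\subseteq\mathcal X_1^\star$, whence the two orbits coincide, contradicting distinctness. Therefore the limit cycle is unique.

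\emph{Orbital stability.} Finally, stability. Theorem \ref{th:ball-contract} supplies, around each point of $\mathcal X^\star$, a forward-invariant ball (under reparameterization), which gives Lyapunov stability of the orbit, while Theorem \ref{th:overall-contract} gives $\text{dist}(\Phi(x_0,t),\mathcal X^\star)\to 0$ for every $x_0\in K$. Since the transverse Lyapunov function $V_\perp$ decays at rate $\lambda$ across the continuous phases and is non-increasing across switches, $x_\perp\to 0$ exponentially, yielding exponential orbital stability in the sense defined in Section II.

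The hard part will be the existence step: uniqueness and stability are almost immediate corollaries of the two preceding theorems, but existence requires genuinely constructing a periodic solution. The delicate points are verifying that the return map maps a compact set into itself and is a strict contraction in the \emph{state-dependent} Riemannian metric (so the contraction is geodesic, not Euclidean, and must be read through the machinery of Theorem \ref{th:overall-contract}); ensuring the composite flow-then-jump map inherits strict contraction from the transverse dynamics even though the jump is only non-expansive; and handling the phase direction, which is absorbed by the choice of return time rather than contracted. Uniformity of the contraction constant over $\Sigma$ is what ultimately guarantees a single fixed point.
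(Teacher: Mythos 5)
Your proof is correct and uses the same essential mechanism as the paper --- a Poincar\'e return map that is contractive in the Riemannian metric by Theorem \ref{th:overall-contract}, followed by the Banach fixed point theorem --- but the construction of the section differs in a way worth noting. The paper first argues that compactness and forward invariance of $K$ give a nonempty $\omega$-limit set, that Theorem \ref{th:overall-contract} forces this set $\Omega(K)$ to be common to all initial conditions, and only then places the section: a hyperplane $H$ orthogonal to $f(x^\star)$ at a chosen $x^\star\in\Omega(K)$. Anchoring the section inside the common $\omega$-limit set is precisely what guarantees recurrence (every trajectory re-enters $B_H$), at the price of having to establish the shared $\omega$-limit set up front. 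You instead take the switching surface itself as the section, so recurrence comes for free from the standing assumption that every solution in each continuous phase reaches $S$ transversally; this is arguably the more natural choice in the hybrid setting, and it has the added benefit that Condition \ref{th:metric-cond} makes the $M$-transversal plane coincide with $S$, so the transverse contraction estimate applies directly to displacements within the section, whereas the paper's Euclidean-orthogonal $H$ must lean entirely on Theorem \ref{th:overall-contract}. One caution: both your argument and the paper's gloss over the fact that Banach requires a \emph{uniform} contraction constant strictly less than one, not merely $d(T(x_1),T(x_2))<d(x_1,x_2)$; you at least flag this explicitly (uniformity of the return time over the compact section $\Sigma$, combined with the rate-$\lambda$ decay in the continuous phase and non-expansiveness of the jump, is what supplies it), which is a point the paper leaves implicit. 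Your uniqueness and stability steps match the paper's in substance.
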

\begin{proof}
Since $K$ is strictly forward invariant and compact, it follows that the omega-limit set, $\Omega(x)$, exists and is a compact subset of $K$. Further, an implication of Theorem \ref{th:overall-contract} is that all points in $K$ have the same $\omega$-limit set, which we denote $\Omega(K)$.

Pick a point $x^\star$ in $\Omega(K)$, by strict forward invariance, this is an interior point of $K$. Assume that $f(x^\star) \neq 0$, otherwise the results of \cite{Lohmiller1998} prove convergence to an equilibrium.  Construct a hyperplane orthogonal to $f(x^\star)$, which we denote by $H$.  We prove convergence to a limit cycle by constructing a Poincar\'e map on $H$.

Since $f(\cdot)$ is smooth, for $x$ in some neighbourhood $B$ of $x^\star$ we have that $f(x)'f(x)>0$, so in $B_H:=B \cap H$ solution curves are transversal to $H$ and pass through it in the same direction as at $x^\star$.

Since $x^\star$ is in the $\omega$-limit set for all points in $K$, and $B_H$ is transversal, the evolution of the system from any point $x(t)\in B_H$ eventually passes through $B_H$ again. That is, $x(t+s)\in B_H$ where $s>0$ depends on $x$.  This evolution can be represented by a Poincar\'e map $T:B_H \rightarrow B_H$.  

Take the distance between two points $d(x_1,x_2)$ in $B_H$ to be the Riemannian metric distance from Theorem \ref{th:overall-contract}.  By Theorem \ref{th:overall-contract}, we have that $d(T(x_1),T(x_2)) < d(x_1,x_2)$. Hence, $T$ is a contractive map from $B_H$ unto itself.  By the Banach fixed point theorem it has a unique stable fixed point, which is its only limit point so must be $x^\star$.  By standard results on Poincar\'e maps this implies that $x^\star$ is a point on a limit cycle, to which all solutions converge, by Theorem \ref{th:overall-contract}.  \end{proof}

\section{Convex Criteria for Limit Cycle Stability in Hybrid Systems}

In this section, we give convex conditions for transverse contraction of hybrid systems, enabling the search for the metric via sum-of-squares programming. 
\begin{condition}[Metric Condition linear in $W$] \label{theorem:metric-linear}
Suppose the normal vector of the switching surface is $z(x)$.  If the Riemannian
metric $M$ and the continuous dynamics $f$ satisfies\begin{equation} \label{eq:W-linear-cond}
\alpha (x)f(x) - W(x)z(x) = \beta (x) c(x)
\end{equation}
for some scalar function $\alpha(x) \geq 0$, 
then Condition \ref{th:metric-cond} is satisfied and all trajectories approach the switching surface orthogonally.
\end{condition}

\begin{proof}
For the orthogonality condition of (\ref{eq:metric-orthog}) in Condition \ref{th:metric-cond} to hold, we require $f'M(x)\delta_x = 0$ to hold for all $z(x)'\delta_x = 0$.  This is equivalent to requiring
for some scalar $\alpha(x)>0$, the following holds
\begin{equation}
z(x)'=\alpha(x) f(x)'M(x)
\end{equation}
for all $x \in S$.  Reformulating this in terms of $W:=M^{-1}$ we yield the requirement\begin{equation}
\alpha(x) f(x) = W(x)z(x)
\end{equation}
for all $c(x) = 0$.  Using an S-procedure formulation, we yield the equivalent equality constraint:

\begin{equation}
\alpha(x) f(x) -W(x)z(x) = \beta(x) c(x)
\end{equation}
which is the required condition for all $x$.  \end{proof}

\begin{theorem}[Convex Conditions for Limit Cycle Stability in Hybrid Nonlinear Systems]
\label{th:hybrid-metric}
Suppose, firstly, there exists a Riemannian metric $M(x)$ for nonlinear system (\ref{eq:hybrid-sys1})
and (\ref{eq:hybrid-sys2}) which satisfies Remark \ref{theorem:metric-linear};
secondly, the continuous dynamics of the system satisfies 
\begin{equation}
W(x)A(x)'+A(x)W(x)-\dot W(x) + 2\lambda W(x) - \rho(x)Q(x) \leq 0
\end{equation} 
where $W(x):=M^{-1}, Q(x):=f(x)f(x)'$,  and thirdly, the discrete switching dynamics
$g(x)$ of the system satisfies the following LMI

\begin{equation} \label{eq:schur-lmi}
\begin{bmatrix} W(x)+\zeta (x)Q(x) & W(x)\frac{\partial g}{\partial x}^T
\\
\frac{\partial g}{\partial x}W (x) & W(x) \\
\end{bmatrix} \geq 0
\end{equation}

For some $\zeta(x) \geq 0$, then the overall hybrid system is contracting
with respect to metric $M$.
\end{theorem}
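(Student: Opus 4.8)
The plan is to reduce this statement entirely to Theorem~\ref{th:ball-contract}: I would show that the three convex hypotheses stated here imply, one-by-one, the three hypotheses of that theorem, and then invoke it (together with Theorem~\ref{th:overall-contract} and the subsequent uniqueness result) to conclude that the hybrid system contracts with respect to $M$. Two of the three correspondences are almost immediate. The first hypothesis is precisely the metric condition linear in $W$, Condition~\ref{theorem:metric-linear}, whose own proof already establishes that the orthogonality requirement \eqref{eq:metric-orthog} of Condition~\ref{th:metric-cond} holds on $S$; hence the second hypothesis of Theorem~\ref{th:ball-contract} is met. The second hypothesis stated here is verbatim the convex criterion of Definition~\ref{def:contraction-metric-subsystem}, which is necessary and sufficient for transverse contraction in the sense of Definition~\ref{def:transverse}; hence the first hypothesis of Theorem~\ref{th:ball-contract} is met with rate $\lambda$.

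The substantive work is the discrete part: I must show that the $2n \times 2n$ LMI \eqref{eq:schur-lmi} implies the constrained inequality $\frac{\partial g}{\partial x}'M\frac{\partial g}{\partial x} - M \le 0$ required for all $\delta_x$ with $\delta_x'Mf=0$. Because the lower-right block $W(x)$ is positive definite, I would take the Schur complement of \eqref{eq:schur-lmi} about that block, which yields
\begin{equation*}
W + \zeta Q - W\tfrac{\partial g}{\partial x}'M\tfrac{\partial g}{\partial x}W \ge 0 .
\end{equation*}
I would then apply a congruence transformation by $M=W^{-1}$ (pre- and post-multiplying, which preserves the sign of the form) and use $MW=WM=I$ together with $MQM = Mff'M = (Mf)(Mf)'$ to reduce this to
\begin{equation*}
\frac{\partial g}{\partial x}'M\frac{\partial g}{\partial x} - M - \zeta (Mf)(Mf)' \le 0 .
\end{equation*}

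Finally I would read this last inequality as the (lossless) S-procedure relaxation of the constrained quadratic form: since $\zeta(x)\ge0$ and the rank-one term $(Mf)(Mf)'$ vanishes exactly on the transverse subspace $\{\delta_x : \delta_x'Mf=0\}$, contracting the inequality with any such $\delta_x$ gives $\delta_x'\big(\frac{\partial g}{\partial x}'M\frac{\partial g}{\partial x} - M\big)\delta_x \le 0$, which is precisely the third hypothesis of Theorem~\ref{th:ball-contract}. With all three hypotheses verified, Theorem~\ref{th:ball-contract} and its consequences close the argument. I expect the congruence/Schur manipulation of the discrete block to be the main obstacle: specifically, tracking the direction of the inequality through the transformation, justifying the Schur step via $W \succ 0$, and confirming that the multiplier $\zeta\ge0$ furnished by the LMI legitimately encodes the flow-orthogonality constraint rather than over-restricting it.
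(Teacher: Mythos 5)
Your proposal is correct and follows essentially the same route as the paper: the discrete condition is handled by exactly the paper's Schur-complement/S-procedure manipulation (your congruence by $M$ is the paper's substitution $\eta = M\delta_x$), and the metric and continuous hypotheses are passed directly to Condition~\ref{theorem:metric-linear} and Definition~\ref{def:contraction-metric-subsystem} before invoking Theorem~\ref{th:ball-contract}. The only difference is presentational: you argue the sufficiency direction explicitly (which is all the theorem needs, and avoids relying on losslessness of the S-procedure), whereas the paper chains ``if and only if'' claims from the constrained inequality back to the LMI.
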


\begin{proof}
By Remark \ref{theorem:metric-linear}, just prior to switching, all trajectories approach the switching surface $S$ orthogonally. 

During the discrete jump, we require the following
condition to be satisfied\begin{align}
\delta_x' \left(\frac{\partial g}{\partial x}' M \frac{\partial g}{\partial
x} - M \right) \delta_x &\leq 0 
\end{align}
for all $\delta_x'Mf=0$.

Reformulating in terms of the gradient of the metric, i.e. $\eta := M(x)
\delta_x$ such that $\delta = M^{-1} \eta := W\eta$, we yield the equivalent
condition:
\begin{align}
\eta' \left( W \frac{\partial g}{\partial x}^T W^{-1} \frac{\partial
g}{\partial x} W - W\right) \eta &\leq 0
\end{align}

The transversality condition $\delta'Mf=0$ becomes $\eta'f(x)=0$.  Now, define
matrix function $Q(x):=f(x)f(x)'$ which is rank-one and positive-semidefinite.
 Hence the sets $\{ \eta : \eta'f(x)=0\}$, $\{ \eta : \eta' Q \eta = 0\}$
and $\{ \eta : \eta'Q(x) \eta \leq 0\}$ are equivalent.

Now, the transverse contraction of the discrete switching can be proved by
the existence of $W(x)$ such that:
\begin{multline}
\eta'Q(x) \eta \leq 0 \Rightarrow \\
\eta' \left( W \frac{\partial g}{\partial x}^T W^{-1} \frac{\partial
g}{\partial x} W - W\right) \eta \leq 0 
\end{multline}

By the S-procedure, the condition is only true if and only if there exists $\zeta(x)\geq
0$ such that
\begin{equation} \label{eq:discrete-lmi}
\eta' \left(  W + \zeta(x) Q(x) - W \frac{\partial g}{\partial x}^T W^{-1}
\frac{\partial g}{\partial x} \right) \eta \geq 0
\end{equation}

By the Schur Complement, (\ref{eq:discrete-lmi}) is true if and only if (\ref{eq:schur-lmi})
holds, which completes the proof. 
\end{proof}
Note that these conditions are all linear in the unknown functions $W(x), \alpha(x),\beta(x),\rho(x),$ and $\zeta (x)$, i.e., it consists of a linear matrix inequality at each point $x$.  For polynomial systems, these conditions can be verified efficiently using sum-of-squares programming and positivstellensatz arguments \cite{Tan2006}.

\section{Application Example}

The rimless wheel is a simple planar model of dynamic walking, exhibiting hybrid (switching) behaviour.  It consists of a central mass, of mass $g,$ with equally spaced spikes, of length $l$ extending radially outwards. The system rolls down an incline of pitch $\gamma$, as shown in Fig. \ref{fig:rimless}.

\begin{figure}[h]
    \centering
    \includegraphics[width=4cm]{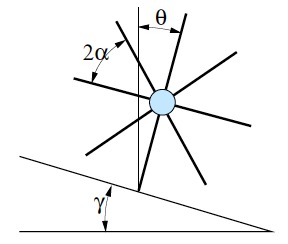}
    \caption{The rimless wheel model}
    \label{fig:rimless}
\end{figure}

At any given moment, the rimless wheel rotates about the stance foot without slipping, behaving like an inverted pendulum.  When the next foot contacts the ground, it is assumed that an elastic collision occurs such that the old stance foot lifts off and the system now rotates about the new stance foot.

The Rimless Wheel state space $x=[\theta,\dot\theta]'$ can be represented with the following hybrid system
dynamics:\begin{align}
\frac{d}{dt}
\begin{bmatrix} \theta\\ \dot\theta \end{bmatrix} &= f(\theta,\dot\theta) = \begin{bmatrix} \dot\theta \\ \frac{g}{l}\sin \theta \end{bmatrix}  
& \text{for } \theta - \gamma - \alpha \neq 0 
\\
\begin{bmatrix} \theta^+ \\ \dot\theta^+ \end{bmatrix} &= g(\dot\theta^- )= \begin{bmatrix} \gamma - \alpha \\ \cos (2\alpha) \dot\theta^- \end{bmatrix} &\text{for } \theta - \gamma-\alpha = 0 
\end{align}

On a sufficiently inclined slope, the system has a stable limit cycle, for which the energy lost in collision is perfectly compensated by the change in potential energy. 

The system has been studied extensively and its basin of attraction has ben computed exactly. \cite{Coleman1998}

\begin{figure}[h]
    \centering
    \includegraphics[width=\linewidth]{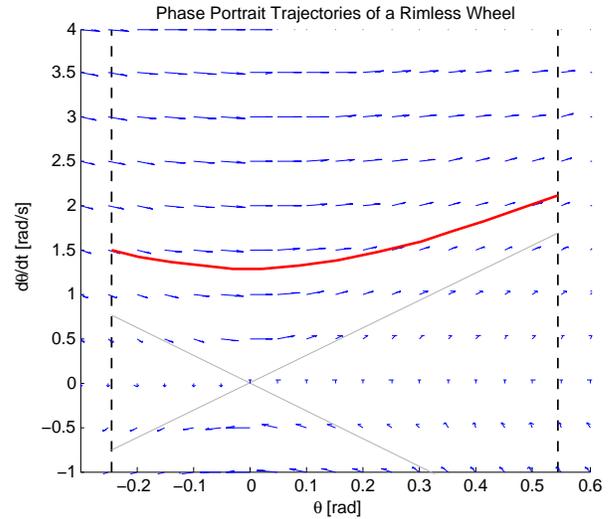}
    \caption{Phase diagram of the rimless wheel model}
    \label{fig:rimless-limit-cycle}
\end{figure}

Figure \ref{fig:rimless-limit-cycle} shows the phase portrait of the rimless
wheel, with blue arrows indicating the direction of the continuous dynamics.
 The dotted line on the right of the graph indicates the collision surface
that maps to the left edge of the graph (or vice-versa, depending on the
direction of dynamics). The grey line represents the homoclinic orbits of the system, and the red line represents the stable limit cycle.

Using the convex conditions of Theorem \ref{th:hybrid-metric}, we formulate sum-of-squares\ (SOS) and Positivestellansatz conditions \cite{Parrilo2003}
which verifies transverse contraction for the hybrid system in a region around the limit cycle, defined by the switching surfaces and a B\'ezier polynomial $b(x)$.

Let $H = A(x)W(x) + W(x)A(x)' - \dot W(x) + 2\lambda W(x)$, let $\Sigma_n[x]$ denote the set of $n \times n$ matrices verified positive semidefinite.
We approximate the continuous dynamics $f(x)$ with a third order taylor series expansion.

The conditions verified are given below.
\begin{align} 
\begin{split}
W(x) - (f(x)^T f(x)- \epsilon ) L_1(x)  \\ 
- (\theta - (\gamma - \alpha)) L_2(x) \\ 
- ((\alpha+\gamma)-\theta) L_3(x) \\
- (\dot\theta - b(x))L_4(x)
&\in \Sigma_n[x]
\end{split} \label{eq:positive-W}
\\[1em]
\alpha f(x) - W(x) \nabla c &= \beta (x)c(x) \label{eq:equal-sos} \\[1em]
\begin{split}
-H - \rho(x) f(x) f(x)' \\ 
- (f(x)^T f(x)- \epsilon ) L_5(x) \\
- (\theta - (\gamma - \alpha)) L_6(x) \\
- ((\alpha+\gamma)-\theta) L_7(x) \\
- (\dot\theta - b(x))L_8(x)
& \in \Sigma_n[x]
\end{split} \label{eq:sos-contract}
\\[1em]
\begin{bmatrix} W(x) + \zeta(x)Q(x) & W(x) \frac{\partial g}{\partial x}^T\\ \frac{\partial g}{\partial x} W(x) & W(x)\end{bmatrix}&\in \Sigma_{2n}[x] \label{eq:schur-cond}  \\[1em]
L_1, L_2, L_3, L_4, L_5, L_6, L_7, L_8, \beta(x) &\in \Sigma_n[x] \label{eq:lagrange-sos} \\[1em]
 \alpha(x),\rho(x) , \zeta(x) &\in  \label{eq:scalar-sos}  \Sigma[x]
\end{align}

(\ref{eq:positive-W}) verifies the positive-definiteness of $W$ within the defined region; (\ref{eq:equal-sos}) verifies the condition of Remark \ref{theorem:metric-linear}; (\ref{eq:sos-contract}) and (\ref{eq:schur-cond}) verifies the conditions of Theorem \ref{th:hybrid-metric}; and, finally, (\ref{eq:lagrange-sos}) and (\ref{eq:scalar-sos}) verifies positive semi-definiteness of the Lagrange multipliers and scalar functions.

The above conditions were formulated in YALMIP \cite{Lofberg2004,Lofberg2009} and solved by commercial SDP solver MOSEK v.7.0.0.103.  The code has been made available online \cite{rimless2014}.

We found that these conditions could be verfified with $W(x)$ and $\beta(x)$ a matrix of degree-four polynomials, and $L_i(x), \alpha(x), \zeta(x), \rho(x)$ degree-two.  Figure \ref{fig:rimless-phase-v} shows verified regions of stability coloured in green.

\begin{figure}[h]
    \centering
    \includegraphics[width=\linewidth]{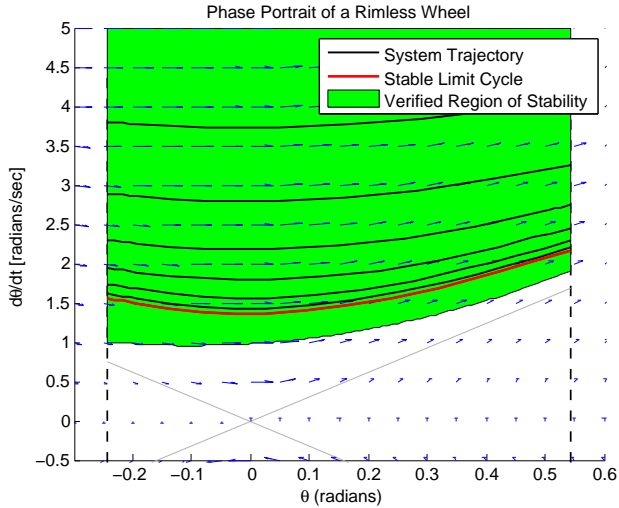}
    \caption{Verified region of transverse contraction for Rimless Wheel}
    \label{fig:rimless-phase-v}
\end{figure}

\section{Conclusion}
We have derived differential conditions guaranteeing the orbital stability of nonlinear hybrid limit cycles.  These conditions are presented as pointwise linear matrix inequalities,  enabling an efficient search  for a stability certificate.

The main advantages of this approach over traditional Lyapunov-based methods are two-fold.  Firstly, the transverse contraction framework decouples the question of convergence from knowledge of a particular solution.  This opens doors to robustness analysis when the exact location of the limit cycle is unknown due to uncertainty in the dynamics.

Further, this method simplifies the search for stability certificate compared with previous Lyapunov-based method in \cite{Manchester2010}, which requires a separate search for transversal surfaces and valid Lyapunov functions on those surfaces.  By encapsulating the direction of transversal surfaces with the definition of orthogonality, this method allows the search for stability certificate by a single convex optimization problem -- a search for a valid  transverse contraction  metric $M(x)$.

 The ability to efficiently compute stability
certificates for hybrid systems in this  work opens opportunities for control design with
guaranteed stabilizability \cite{Manchester2013}, and would enable the search
for provably stable models in system identification \cite{Tobenkin2010a,Manchester2011a}.

\bibliographystyle{IEEEtran}
\bibliography{library}

\end{document}